\newtheorem{theorem}{Theorem}
\newtheorem{lemma}{Lemma}
\newtheorem{corollary}{Corollary}
\newtheorem{remark}{Remark}
\newenvironment{proof}{\begin{trivlist} \item[\hskip\labelsep{\it Proof.}]}{$\hfill\Box$\end{trivlist}}
\newcommand{\rd}{\,\mathrm{d}}
\newcommand{\bsone}{\boldsymbol{1}}
\newcommand{\RR}{\mathbb{R}}
\newcommand{\NN}{\mathbb{N}}
\newcommand{\cV}{\mathcal{V}}
\newcommand{\sym}{{\rm sym}}
\newcommand{\cH}{\mathcal{V}}
\newcommand{\cS}{\mathcal{S}}
\title{$L_p$-discrepancy of the symmetrized van der Corput sequence}
\author{Ralph Kritzinger and Friedrich Pillichshammer\thanks{The authors are supported by the Austrian Science Fund (FWF): Project F5509-N26, which is a part of the Special Research Program "Quasi-Monte Carlo Methods: Theory and Applications".}}
\date{}
\begin{document}

\maketitle

\begin{abstract}
It is well known that the $L_p$-discrepancy for $p \in [1,\infty]$ of the van der Corput sequence is of exact order of magnitude $O((\log N)/N)$. This however is for $p \in (1,\infty)$ not best possible with respect to the lower bounds according to Roth and Proinov. For the case $p=2$ it is well known that the symmetrization trick due to Davenport leads to the optimal $L_2$-discrepancy rate $O(\sqrt{\log N}/N)$ for the symmetrized van der Corput sequence. In this note we show that this result holds for all $p \in (1,\infty)$. The proof is based on an estimate of the Haar coefficients of the corresponding local discrepancy and on the use of the Littlewood-Paley inequality.
\end{abstract}

\centerline{\begin{minipage}[hc]{130mm}{
{\em Keywords:} $L_p$-discrepancy, van der Corput sequence, Davenport's reflection principle\\
{\em MSC 2000:} 11K38, 11K31}
\end{minipage}}

 \allowdisplaybreaks
\section{Introduction and Statement of the Result} 

For an infinite sequence $\cS =(x_n)_{n \ge 0}$ of points in $[0,1)$
the {\it local discrepancy} of its first $N$ elements is defined as
$$D_N(\cS,t)=\frac{1}{N}\sum_{n=0}^{N-1}\bsone_{\left[0,t\right)}(x_n)- t,$$ where, throughout this paper,
$\bsone_{I}(x)$ denotes the indicator function of the interval $I\subseteq \left[0,1\right]$. The {\it $L_p$-discrepancy} for $p \in [1,\infty]$ of $\cS$ is defined as the $L_p$-norm of the local discrepancy, thus, for $p \in (1,\infty)$
\begin{eqnarray*}
L_{p,N}(\cS):=\| D_N(\cS,\cdot)\|_{L_p}= \left(\int_{0}^{1} |D_N(\cS,t)|^p \rd t\right)^{1/p}.
\end{eqnarray*}
For $p=\infty$ we have $$L_{\infty,N}(\cS):=\| D_N(\cS,\cdot)\|_{L_{\infty}}= \sup_{t \in [0,1]} |D_N(\cS,t)|.$$
The $L_p$-discrepancy is a quantitative measure for the irregularity of distribution of a sequence modulo one, see, e.g., \cite{DT97,kuinie}. It is also related to the worst-case integration error of a quasi-Monte Carlo rule, see, e.g., \cite{DP10,LP14}.

It is well known that for every $p\in(1,\infty]$ there exists a positive number $c_p$ with the property that for every sequence $\cS$ in $[0,1)$ we have 
\begin{equation}\label{lowschmid}
L_{\infty,N}(\cS) \ge c_{\infty} \frac{\log N}{N} \ \ \ \mbox{ for infinitely many $N \in \NN$}
\end{equation}
 and, for $p \in (1,\infty)$, 
\begin{equation}\label{lowproinov}
L_{p,N}(\cS) \ge c_p \frac{\sqrt{\log N}}{N} \ \ \ \mbox{ for infinitely many $N \in \NN$},
\end{equation}
where $\log$ denotes the natural logarithm and where $\NN$ denotes the set of positive integers $\{1,2,3,\ldots\}$. The result for $p=\infty$ was shown by Schmidt~\cite{Schm72distrib} and the result for $p \in (1,\infty)$ was shown by Proinov~\cite{pro86} based on results of Roth~\cite{Roth} and Schmidt~\cite{schX}. Both lower bounds \eqref{lowschmid} and \eqref{lowproinov} are optimal in the order of magnitude in $N$.\\

A prototype of a sequence with low discrepancy is the {\it van der Corput sequence} (in base 2). Let $\varphi(n)$ denote the {\it radical inverse} of $n\in \NN_0$ in base $2$ (where $\NN_0=\NN \cup \{0\}$) which is defined as $\varphi(n):=\sum_{i=0}^{k}n_i2^{-i-1}$ whenever $n$ has binary expansion $n=\sum_{i=0}^{k}n_i2^i$, where $n_i\in \{0,1\}$ for all $i \in \{0,\dots,k\}$. The van der Corput sequence (in base 2) is the sequence $\cV=(y_n)_{n \ge 0}$ where $y_n=\varphi(n)$ for $n \in \NN_0$.

For the van der Corput sequence it is known that (see, e.g, \cite{befa,hab1966}) $$\limsup_{N \rightarrow \infty} \frac{N L_{\infty,N}(\cV)}{\log N}=\frac{1}{3 \log 2}$$ and hence $L_{\infty,N}(\cV)$ is of order of magnitude $O((\log N)/N)$ which is best possible in $N$ according to \eqref{lowschmid}. However, for $p \in (1,\infty)$ it is known that (see, e.g., \cite{chafa,proat} for $p=2$ and \cite{pil04} for general $p$) $$\limsup_{N \rightarrow \infty} \frac{N L_{p,N}(\cV)}{\log N}=\frac{1}{6 \log 2}.$$ This means that $L_{p,N}(\cV)$ for $p \in (1,\infty)$ is only of order of magnitude $O((\log N)/N)$ which is {\it not} best possible in $N$ if we compare with \eqref{lowproinov}.

One way out to overcome this defect of the van der Corput sequence is based on symmetrization which was initially introduced by Davenport for $(n\alpha)$-sequences (see \cite[Theorem~1.75]{DT97}). This method is also known as {\it Davenport's reflection principle}.  

We define the {\it symmetrized van der Corput sequence} (in base 2) $\cH^{\sym}=(z_n)_{n\geq 0}$ as
\[z_n = \begin{cases} \varphi(m) &\mbox{if } n=2m, \\ 
                       1-\varphi(m) & \mbox{if } n=2m+1. \end{cases} 
\]

Then it is known, see e.g. \cite{chafa,fau90,lp,pro1988a}, that the $L_2$-discrepancy of the symmetrized van der Corput sequence is of optimal order of magnitude in $N$ compared to the lower bound in \eqref{lowproinov}, i.e., 
\begin{equation}\label{estsymvdcL2}
L_{2,N}(\cV^{\sym}) \ll \frac{\sqrt{\log N}}{N}.
\end{equation}
Here and throughout the paper, for functions $f,g:\NN \rightarrow \RR^+$, we write $g(N) \ll f(N)$, 
if there exists a $C>0$ such that $g(N) \le C f(N)$ for all $N \in \NN$, $N \ge 2$. If we would like to stress that $C$ depends on some parameter, say $p$, this will be indicated by writing $\ll_p$. 

It is the aim of this paper to show that the estimate \eqref{estsymvdcL2} holds for all $p \in (1,\infty)$. We show:

\begin{theorem}\label{thm1}
For every $p \in (1,\infty)$ we have
    \[ L_{p,N}(\cH^{\sym})\ll_p \frac{\sqrt{\log{N}}}{N}.\]
\end{theorem}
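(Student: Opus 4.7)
The plan is to expand the local discrepancy of $\cH^{\sym}$ in the Haar basis and control the $L_p$-norm via the Littlewood--Paley inequality. First, I would establish a Davenport-type identity: since $\mathbf{1}_{[0,t)}(1-x)=1-\mathbf{1}_{[0,1-t]}(x)$ (up to a null set), the first $2N$ points of $\cH^{\sym}$ yield
$$D_{2N}(\cH^{\sym},t)\;=\;\tfrac12\bigl(D_N(\cV,t)-D_N(\cV,1-t)\bigr),$$
with a routine $O(1/N)$ correction when passing from $2N$ to an arbitrary number of points. The Haar function $h_{j,m}$ (supported on $[m2^{-j},(m+1)2^{-j})$) satisfies $h_{j,m}(1-t)=-h_{j,2^j-1-m}(t)$, so if $\mu_{j,m}^{(N)}$ denotes the Haar coefficient of $D_N(\cV,\cdot)$ at scale $(j,m)$, the Haar coefficient of the symmetrized local discrepancy is
$$\nu_{j,m}^{(N)}\;=\;\tfrac12\bigl(\mu_{j,m}^{(N)}+\mu_{j,\,2^j-1-m}^{(N)}\bigr).$$

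Second, I would compute or quote the precise formulae for $\mu_{j,m}^{(N)}$. It is known that $L_{p,N}(\cV)\asymp(\log N)/N$ is caused by a few ``bad'' scales at which $|\mu_{j,m}^{(N)}|$ is of order $2^{-j}/N$ rather than the generic $2^{-2j}/N$. The crucial observation is that at each such bad scale the coefficient is anti-symmetric under the reflection $m\mapsto 2^j-1-m$; this is exactly Davenport's reflection mechanism, and it forces $|\nu_{j,m}^{(N)}|\ll 2^{-2j}/N$ uniformly in $(j,m)$, with no bad scales left. The bookkeeping must be done for arbitrary $N$, which I would address via the binary expansion of $N$; each digit contributes one summand of van der Corput type that one treats separately.

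Third, invoking the Littlewood--Paley inequality for $p\in(1,\infty)$ gives
$$\bigl\|D_{2N}(\cH^{\sym},\cdot)\bigr\|_{L_p}\;\asymp_p\;\Bigl\|\Bigl(\sum_{j\ge 0}\sum_{m=0}^{2^j-1}2^{2j}\,|\nu_{j,m}^{(N)}|^2\,\mathbf{1}_{[m2^{-j},(m+1)2^{-j})}\Bigr)^{1/2}\Bigr\|_{L_p}.$$
The tail scales $j>\log_2 N$ contribute at most $O(1/N)$, while for $j\le\log_2 N$ the bound $|\nu_{j,m}^{(N)}|\ll 2^{-2j}/N$ yields a pointwise estimate of order $(\log N)/N^2$ for the inner sum, whose $L_p$-norm raised to the power $1/2$ is exactly $\sqrt{\log N}/N$. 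The passage from $2N$ to arbitrary $N$ is then a standard perturbation.

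The main obstacle is the middle step: one must carry out an accurate case analysis of the Haar coefficients $\mu_{j,m}^{(N)}$, identify the bad scales and their sign pattern, and verify rigorously that the mirror pair $(m,2^j-1-m)$ always cancels them. Everything else is either a general principle (Littlewood--Paley) or standard bookkeeping (binary expansion of $N$, passage from even to odd index).
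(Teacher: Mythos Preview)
Your overall framework---expand in the Haar basis and apply the Littlewood--Paley inequality---is exactly what the paper does, and your reflection identity together with the formula $\nu_{j,m}=\tfrac12(\mu_{j,m}+\mu_{j,2^j-1-m})$ is correct. But you have mislocated the cancellation, and the bound you aim for is impossible.

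The claim $|\nu_{j,m}|\ll 2^{-2j}/N$ uniformly in $(j,m)$ cannot hold: plugging it into the square function gives $\sum_{j\le\log_2 N}2^{2j}\cdot 2^{-4j}N^{-2}\ll N^{-2}$, hence $\|D_N(\cH^{\sym},\cdot)\|_{L_p}\ll 1/N$, contradicting Proinov's lower bound \eqref{lowproinov}. (Your own arithmetic slips here too: that sum is $O(N^{-2})$, not $(\log N)/N^2$.) There is in fact no anti-symmetry $\mu_{j,m}=-\mu_{j,2^j-1-m}$ to exploit; for $j\ge 0$ the coefficients of the \emph{plain} van der Corput sequence already satisfy $|\mu_{j,m}^{(N)}|\le 2^{-j}/N$ for $0\le j<\lceil\log_2 N\rceil$ (this is Lemma~\ref{Theo1} in the paper), so no cancellation between mirror pairs is needed and the triangle inequality alone gives $|\nu_{j,m}|\le 2^{-j}/N$. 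With \emph{this} bound the pointwise square-function estimate is indeed $(\log N)/N^2$, and the rest of your argument goes through verbatim.

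The single genuinely bad coefficient is the one you dropped from your Littlewood--Paley sum (you wrote $\sum_{j\ge 0}$): the constant term $\mu_{-1,0}^{(N)}=\int_0^1 D_N(\cV,t)\,\mathrm{d}t$, which is of order $(\log N)/N$ for infinitely many $N$ (see the Remark after Lemma~\ref{erster}). This is precisely the term that symmetrization kills: since $D_N(\cV,1-t)$ has the same integral over $[0,1]$ as $D_N(\cV,t)$, your own identity gives $\nu_{-1,0}=0$ for $N$ even (and $O(1/N)$ for $N$ odd). So the ``main obstacle'' you describe---a sign analysis of mirror pairs at scales $j\ge 0$---is a non-issue; the entire effect of Davenport's reflection lives at level $j=-1$.
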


The proof of this result is based on the Haar function system (in base $2$) and will be given in Section~\ref{secproofthm}. First we collect some auxiliary results in the following section. 

\section{Auxiliary Results}

In order to estimate the $L_p$-discrepancy of $\cH^{\sym}$ we use the one-dimensional Haar system. Haar functions are a useful and often applied tool in discrepancy theory, see e.g. \cite{DHP,hin2010,Mar2013,Mar2013a,Mar2013b}.

To begin with, a {\it dyadic interval} of length $2^{-j}$, $j\in {\mathbb N}_0,$ in $[0,1)$ is an interval of the form 
$$ I=I_{j,m}:=\left[\frac{m}{2^j},\frac{m+1}{2^j}\right) \ \ \mbox{for } \  m=0,1,\ldots,2^j-1.$$ We also define $I_{-1,0}=[0,1)$.
The left and right half of $I=I_{j,m}$ are the dyadic intervals $I^+ = I_{j,m}^+ =I_{j+1,2m}$ and $I^- = I_{j,m}^- =I_{j+1,2m+1}$, respectively. The {\it Haar function} $h_I = h_{j,m}$ with support $I$ 
is the function on $[0,1)$ which is  $+1$ on the left half of $I$, $-1$ on the right half of $I$ and 0 outside of $I$. The $L_\infty$-normalized {\it Haar system} consists of
all Haar functions $h_{j,m}$ with $j\in{\mathbb N}_0$ and  $m=0,1,\ldots,2^j-1$ together with the indicator function $h_{-1,0}$ of $[0,1)$.
Normalized in $L_2([0,1))$ we obtain the {\it orthonormal Haar basis} of $L_2([0,1))$. 

The {\it Haar coefficients} of a function $f \in L_p([0,1))$ are defined as $$\mu_{j,m}(f):=\langle f,h_{j,m} \rangle=\int_{0}^{1} f(t)h_{j,m}(t)\rd t\ \ \ \mbox{for $j\in \NN_{-1}$ and $m \in \mathbb{D}_j$},$$ where here and later on we use the abbreviations $\NN_{-1}:=\NN_{0}\cup\{-1\}$, $\mathbb{D}_{j}:=\{0,1,\dots,2^j-1\}$ for $j\in\NN_0$ and $\mathbb{D}_{-1}:=\{0\}$.

In the following, we will compute the {\it Haar coefficients} of the local discrepancy of $\cH^{\sym}$, i.e.,
\[ \mu_{j,m}(D_N(\cH^{\sym},\cdot))=\langle D_N(\cH^{\sym},\cdot),h_{j,m} \rangle=\int_{0}^{1}D_N(\cH^{\sym},t)h_{j,m}(t)\rd t. \]

Preceeding the computation of the Haar coefficients, we collect some properties of the radical inverse function $\varphi(n)$ which
we will need in the proof of the essential Lemma~\ref{Theo1}.

\begin{lemma} \label{Phi}
   The following relations hold for the radical inverse function $\varphi$:
   \begin{enumerate}
       \item  $\varphi(2^js)=\frac{1}{2^j}\varphi(s)$ for all $j,s \in \NN_0$,
       \item \label{pt2} $\varphi(2^j\varphi(m))=\frac{m}{2^j}$ for all $j \in \NN_0$ and $m\in \{0,\dots,2^j-1\}$,
       \item \label{pt3} $\varphi(n) \in I_{j,m}$ if and only if $n=2^j\varphi(m)+2^js$ for some $s \in \NN_0$,
       \item $\frac{A}{2}\leq \sum_{s=0}^{A}\left|1-2\varphi(s)\right|\leq \frac{A}{2}+1$ for all $A\in \NN_0$.
   \end{enumerate}
\end{lemma}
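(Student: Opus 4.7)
All four assertions are structural facts about the base-$2$ radical inverse, and each reduces to a short manipulation of binary expansions. My plan is to treat them in the given order, since part 3 depends on part 2, and part 2 builds on the bit-reversal viewpoint already suggested by part 1. Writing $s=\sum_{i\ge 0} s_i 2^i$, part 1 is immediate from the definition of $\varphi$: multiplication by $2^j$ just shifts the digits of $s$, so $\varphi(2^j s)=\sum_i s_i 2^{-i-j-1}=2^{-j}\varphi(s)$. For part 2 I would write $m=\sum_{i=0}^{j-1} m_i 2^i$, observe that $2^j\varphi(m)=\sum_{i=0}^{j-1} m_i 2^{j-1-i}\in\NN_0$ is the ``bit-reversal'' of $m$, and then apply $\varphi$ to this integer digit by digit, which reverses the bits a second time and recovers $m/2^j$.

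For part 3 the condition $\varphi(n)\in I_{j,m}$ is equivalent to the first $j$ binary digits of $\varphi(n)$ coinciding with those of $m/2^j$. By the definition of $\varphi$ these are precisely the lowest $j$ bits of $n$, and by part 2 the integer $2^j\varphi(m)<2^j$ carries exactly these bits. Splitting $n$ as (its lowest $j$ bits) plus $2^j$ times the remaining higher bits then produces the claimed representation $n=2^j\varphi(m)+2^j s$ with $s\in\NN_0$, and the converse is a direct substitution using part 1.

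Part 4 is the only piece with a numerical flavour, and I would handle it by a pairing argument rather than induction. Part 1 with $j=1$ gives $\varphi(2m)=\tfrac12\varphi(m)$, and a one-line computation with binary expansions gives $\varphi(2m+1)=\tfrac12+\tfrac12\varphi(m)$, so for every $m\in\NN_0$,
\[
|1-2\varphi(2m)|+|1-2\varphi(2m+1)| \;=\; (1-\varphi(m))+\varphi(m)\;=\;1.
\]
Grouping $\sum_{s=0}^{A}|1-2\varphi(s)|$ into consecutive pairs therefore yields exactly $(A+1)/2$ when $A$ is odd and $A/2+|1-\varphi(A/2)|$ when $A$ is even; in both cases the sum lies in $[A/2,A/2+1]$ since $|1-\varphi(A/2)|\in(0,1]$. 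I do not anticipate any real obstacle here; the one subtlety to keep in mind is that the integrality of $2^j\varphi(m)$ in part 2 genuinely needs the hypothesis $m<2^j$, which is also what makes the representation in part 3 unique.
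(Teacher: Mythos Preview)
Your proposal is correct and follows essentially the same route as the paper: parts 1--3 are handled by direct manipulation of binary expansions (with part 3 reduced to splitting $n$ into its lowest $j$ bits plus a multiple of $2^j$ and invoking part 2), and part 4 is proved by the same pairing identity $|1-2\varphi(2m)|+|1-2\varphi(2m+1)|=1$, then grouping the sum into consecutive pairs. Only cosmetic differences remain: the paper writes the leftover term for even $A$ as $|1-2\varphi(A)|$ rather than your equivalent $|1-\varphi(A/2)|$, and for the converse in part 3 you tacitly use not just part 1 but also part 2 and the fact that the digits of $2^j\varphi(m)$ and $2^j s$ do not overlap.
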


\begin{proof} 
    \begin{enumerate}
       \item Let $s=\sum_{i=0}^{k}s_i2^i$, where $s_i \in \{0,1\}$ for all $i \in \{0,\dots,k\}$. Then $\varphi(2^js)=\varphi(\sum_{i=0}^{k}s_i2^{i+j})=\sum_{i=0}^{k}s_i2^{-i-j-1}=2^{-j}\sum_{i=0}^{k}s_i2^{-i-1}=\frac{1}{2^{j}}\varphi(s).$
       \item Since $0 \leq m \leq 2^j-1$, $m$ has a binary representation of the form $m=\sum_{i=0}^{j-1}m_i2^i$, where $m_i \in \{0,1\}$ for all $i \in \{0,\dots,j-1\}$. Then $2^j\varphi(m)=\sum_{i=0}^{j-1}m_i2^{j-i-1}$
             and therefore $\varphi(2^j\varphi(m))=\sum_{i=0}^{j-1}m_i2^{-(j-i-1)-1}=2^{-j}\sum_{i=0}^{j-1}m_i2^{i}=m2^{-j}$.
       \item We write $n$ in the form $n=\tilde n +2^js$, where $\tilde n\in \{0,1,\dots, 2^j-1\}$ and $s\in \NN_0$. Then $\varphi(\tilde n)=n'2^{-j}$ for some $n'\in \{0,1,\dots,2^j-1\}$ as one can verify easily. We have $\varphi(n)=\frac{n'}{2^j}+\frac{\varphi(s)}{2^j}$. We see that $\varphi(n)\in I_{j,m}$ is true if and only if $n'=m$. But from Point~\ref{pt2}. we know $\varphi^{-1}(m2^{-j})=2^j\varphi(m)$ and the proof of Point~\ref{pt3}. is done.       
       \item To begin with, we verify the relation $|1-2\varphi(2n)|+|1-2\varphi(2n+1)|=1$ for all $n\in \NN_0$. Therefore we observe that $\varphi(2n)\leq \frac{1}{2}$
             for all $n\in \NN_0$. We also have $\varphi(2n+1)=\varphi(2n)+\varphi(1)=\varphi(2n)+\frac{1}{2}$, hence
             \begin{align*}
                |1-2\varphi(2n)|+|1-2\varphi(2n+1)|=&1-2\varphi(2n)+\left|1-2\left(\varphi(2n)+\frac{1}{2}\right)\right| \\
                                             =&1-2\varphi(2n)+|-2\varphi(2n)|=1.
             \end{align*}
             This leads to 
             \[ \sum_{s=0}^{A}\left|1-2\varphi(s)\right|=\sum_{n=0}^{(A-1)/2}\left\{\left|1-2\varphi(2n)\right|+\left|1-2\varphi(2n+1)\right|\right\}=\sum_{n=0}^{(A-1)/2}1=\frac{A+1}{2} \]
             for odd $A$ and
             \begin{align*} \sum_{s=0}^{A}\left|1-2\varphi(s)\right|=&\sum_{n=0}^{(A-2)/2}\left\{\left|1-2\varphi(2n)\right|+\left|1-2\varphi(2n+1)\right|\right\}+|1-2\varphi(A)| \\ =&\sum_{n=0}^{(A-2)/2}1+\left|1-2\varphi(A)\right|=\frac{A}{2}+\left|1-2\varphi(A)\right| \end{align*} for even $A$. 

             Hence in both cases we have $\frac{A}{2}\leq \sum_{s=0}^{A}\left|1-2\varphi(s)\right|\leq \frac{A}{2}+1$. 
   \end{enumerate}

\end{proof}

\begin{lemma} \label{erster}
   The Haar coefficient $\mu_{-1,0}$ of the local discrepancy $D_N(\cH^{\sym},\cdot)$ satisfies
   \[|\mu_{-1,0}|= \begin{cases} 0 &\mbox{if } N=2M, \\ 
                       \left|\frac{1}{2N}-\frac{\varphi(M)}{N}\right|\leq \frac{1}{2N}\ & \mbox{if } N=2M+1. \end{cases} 
\]
\end{lemma}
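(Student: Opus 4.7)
The plan is to compute $\mu_{-1,0}$ directly from its definition, since $h_{-1,0}$ is the indicator of $[0,1)$:
$$\mu_{-1,0}=\int_0^1 D_N(\cH^{\sym},t)\rd t=\frac{1}{N}\sum_{n=0}^{N-1}\int_0^1 \bsone_{[0,t)}(z_n)\rd t-\int_0^1 t\rd t.$$
Using $\int_0^1\bsone_{[0,t)}(z_n)\rd t=1-z_n$, this reduces to
$$\mu_{-1,0}=\frac{1}{2}-\frac{1}{N}\sum_{n=0}^{N-1} z_n,$$
so the whole task collapses to evaluating $\sum_{n=0}^{N-1}z_n$ for the symmetrized van der Corput sequence.

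Next I would exploit the pairing structure of $\cH^{\sym}$: by definition, consecutive pairs $(z_{2m},z_{2m+1})=(\varphi(m),1-\varphi(m))$ sum to $1$. Hence for $N=2M$ the sum telescopes nicely:
$$\sum_{n=0}^{2M-1}z_n=\sum_{m=0}^{M-1}\bigl(\varphi(m)+1-\varphi(m)\bigr)=M,$$
which gives $\mu_{-1,0}=\tfrac{1}{2}-\tfrac{M}{2M}=0$, as claimed.

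For $N=2M+1$ there is a single unpaired final point $z_{2M}=\varphi(M)$, so
$$\sum_{n=0}^{2M}z_n=M+\varphi(M),$$
and substituting into the formula for $\mu_{-1,0}$ and simplifying over the common denominator $2N=2(2M+1)$ yields
$$\mu_{-1,0}=\frac{1}{2}-\frac{M+\varphi(M)}{2M+1}=\frac{1-2\varphi(M)}{2(2M+1)}=\frac{1}{2N}-\frac{\varphi(M)}{N}.$$
The stated upper bound $\tfrac{1}{2N}$ then follows immediately from $0\le \varphi(M)<1$, which gives $|1-2\varphi(M)|\le 1$. There is no real obstacle here; the only thing to be careful with is the algebraic rearrangement in the odd case to bring the expression into the exact form $\tfrac{1}{2N}-\tfrac{\varphi(M)}{N}$ stated in the lemma. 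No property of $\varphi$ beyond its range $[0,1)$ is needed, so Lemma~\ref{Phi} is not invoked at this stage.
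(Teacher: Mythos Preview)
Your proof is correct and follows essentially the same approach as the paper's own proof: both compute $\mu_{-1,0}=\tfrac{1}{2}-\tfrac{1}{N}\sum_{n=0}^{N-1}z_n$ directly and then evaluate the sum by pairing $\varphi(m)$ with $1-\varphi(m)$. Your write-up is in fact slightly more explicit about the algebra in the odd case and the source of the bound $|1-2\varphi(M)|\le 1$.
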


\begin{proof}
  We have
  \begin{align*}
    \mu_{-1,0}=& \int_{0}^{1}D_N(\cH^{\sym},t)\rd t =\int_{0}^{1}\left(\frac{1}{N}\sum_{n=0}^{N-1}\bsone_{\left[0,t\right)}(x_n)-t\right)\rd t \\ =&\frac{1}{N}\sum_{n=0}^{N-1}\int_{0}^{1}\bsone_{\left[0,t\right)}(x_n)\rd t-\frac{1}{2}
    =\frac{1}{N}\sum_{n=0}^{N-1}\left(\int_{x_n}^{1}\bsone_{\left[0,t\right)}(x_n)\rd t\right)-\frac{1}{2} \\
    =&\frac{1}{N}\sum_{n=0}^{N-1}(1-x_n)-\frac{1}{2}=\frac{1}{2}-\frac{1}{N}\sum_{n=0}^{N-1}x_n.
  \end{align*}
  We therefore have to investigate the sum $\sum_{n=0}^{N-1}x_n$. If $N=2M$, then we have
  \[\sum_{n=0}^{2M-1}x_n=\sum_{n=0}^{M-1}\varphi(m)+\sum_{m=0}^{M-1}(1-\varphi(m))=M=\frac{N}{2}.\]
  For $N=2M+1$, we find
  \[\sum_{n=0}^{2M}x_n=\sum_{n=0}^{2M-1}x_n+\varphi(M)=M+\varphi(M)=\frac{N-1}{2}+\varphi\left(M\right).\]
  This leads to the desired result.
\end{proof}

\begin{remark}\rm
Lemma~\ref{erster} is a crucial fact. It shows how the symmetrization trick keeps the Haar coefficient $\mu_{-1,0}$ small enough in order to achive the optimal $L_p$-discrepancy rate. Let us compare this with the behavior of the Haar coefficient $\mu_{-1,0}^{N,\varphi}$ of the local discrepancy of the first $N$ terms of the {\it usual (not symmetrized)} van der Corput sequence:  for $2^m \le N < 2^{m+1}$ it is known that $$\mu_{-1,0}^{N,\varphi}=\frac{1}{2 N}\left(1+\sum_{r=0}^{m-1}\left\|\frac{N}{2^{r+1}}\right\|\right),$$ where $\|x\|$ denotes the distance of $x$ to the nearest integer, see, e.g., \cite[Proposition~1]{DLP05}. Further we know from \cite[Theorem~3]{LP2003} that $$\max_{2^m \le N < 2^{m+1}} \sum_{r=0}^{m-1}\left\|\frac{N}{2^{r+1}}\right\|=\frac{m}{3}+\frac{1}{9}-(-1)^m \frac{1}{9\cdot 2^m}.$$ Hence it follows that there exists some constant $c>0$ such that $$\mu_{-1,0}^{N,\varphi} \ge c \frac{\log N}{N}  \ \ \ \mbox{ for infinitely many $N \in \NN$}.$$ Therefore, for the usual van der Corput sequence, already the first (and only the first, c.f. Lemma~\ref{Theo1}) Haar coefficient of the local discrepancy has the ``bad'' order of magnitude $(\log N)/N$. 
\end{remark}

In the following, we write $D_{N}^{\sym}(t):=D_N(\cH^{\sym},t)$ and denote by $D_N^{\varphi}(t)$ the local discrepancy of the first $N$ elements of the sequence $(\varphi(n))_{n\geq 0}$ and
analogously $D_N^{1-\varphi}(t)$ the local discrepancy of the first $N$ elements of the sequence $(1-\varphi(n))_{n\geq 0}$.
Let $\mu_{j,m}^{N,\sym}$, $\mu_{j,m}^{N,\varphi}$ and $\mu_{j,m}^{N,1-\varphi}$ be the Haar coefficients of these three functions.\begin{lemma} \label{Diskr}
    For $N=2M$ we have
    \[ D_{2M}^{\sym}(t)=\frac{1}{2}\left( D_{M}^{\varphi}(t)+D_{M}^{1-\varphi}(t) \right) \]
    and for $N=2M+1$
    \[ D_{2M+1}^{\sym}(t)=\frac{1}{2M+1}\left( (M+1)D_{M+1}^{\varphi}(t)+MD_{M}^{1-\varphi}(t) \right). \]
\end{lemma}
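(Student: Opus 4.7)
The plan is to unfold the definition of the local discrepancy of $\cH^{\sym}$ and split the sum over the first $N$ indices according to parity, since by construction even indices yield points of the form $\varphi(m)$ while odd indices yield points of the form $1-\varphi(m)$. This reduces everything to the discrepancies of initial segments of $(\varphi(n))_{n\geq 0}$ and $(1-\varphi(n))_{n\geq 0}$, and the rest is just bookkeeping.

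Concretely, for $N=2M$ the first $2M$ points of $\cH^{\sym}$ consist of exactly $\varphi(0),\ldots,\varphi(M-1)$ together with $1-\varphi(0),\ldots,1-\varphi(M-1)$. Therefore
\[
D_{2M}^{\sym}(t) = \frac{1}{2M}\sum_{m=0}^{M-1}\bsone_{[0,t)}(\varphi(m)) + \frac{1}{2M}\sum_{m=0}^{M-1}\bsone_{[0,t)}(1-\varphi(m)) - t,
\]
and splitting $t = \tfrac{1}{2}t + \tfrac{1}{2}t$ to balance the two sums immediately gives $\tfrac12(D_M^{\varphi}(t)+D_M^{1-\varphi}(t))$.

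For $N=2M+1$ the first $2M+1$ points similarly split into the $M+1$ points $\varphi(0),\ldots,\varphi(M)$ (the extra term coming from the even index $n=2M$) and the $M$ points $1-\varphi(0),\ldots,1-\varphi(M-1)$. Writing
\[
D_{2M+1}^{\sym}(t) = \frac{1}{2M+1}\sum_{m=0}^{M}\bsone_{[0,t)}(\varphi(m)) + \frac{1}{2M+1}\sum_{m=0}^{M-1}\bsone_{[0,t)}(1-\varphi(m)) - t
\]
and using the decomposition $t = \tfrac{M+1}{2M+1}\,t + \tfrac{M}{2M+1}\,t$ to distribute $-t$ appropriately between the two sums produces the stated identity.

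There is no real obstacle here: the argument is a direct calculation, and the only point requiring any care is the correct counting of how many terms from $(\varphi(n))$ and from $(1-\varphi(n))$ appear for even versus odd $N$, and the corresponding choice of weights $\tfrac{M+1}{2M+1}$ and $\tfrac{M}{2M+1}$ when redistributing the $-t$ term. Both cases can be written out in a few lines each.
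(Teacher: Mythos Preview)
Your proof is correct and follows essentially the same approach as the paper: both split the counting sum by parity of the index and redistribute the $-t$ term with weights $\tfrac12,\tfrac12$ (for $N=2M$) or $\tfrac{M+1}{2M+1},\tfrac{M}{2M+1}$ (for $N=2M+1$) to recover the discrepancies of the subsequences.
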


\begin{proof} For $N=2M$ we have
   \begin{align*} D_{2M}^{\sym}(t)=&\frac{1}{2M}\left(\sum_{n=0}^{M-1}\bsone_{\left[0,t\right)}(\varphi(n))+\sum_{n=0}^{M-1}\bsone_{\left[0,t\right)}(1-\varphi(n))\right)-t \\
     =&\frac{1}{2}\left(\frac{1}{M}\sum_{n=0}^{M-1}\bsone_{\left[0,t\right)}(\varphi(n))-t+\frac{1}{M}\sum_{n=0}^{M-1}\bsone_{\left[0,t\right)}(1-\varphi(n))-t\right) \\
     =&\frac{1}{2}\left(D_{M}^{\varphi}(t)+D_{M}^{1-\varphi}(t)\right).
   \end{align*}
   and for $N=2M+1$ we obtain
   \begin{align*} D_{2M+1}^{\sym}(t)=&\frac{1}{2M+1}\left(\sum_{n=0}^{M}\bsone_{\left[0,t\right)}(\varphi(n))+\sum_{n=0}^{M-1}\bsone_{\left[0,t\right)}(1-\varphi(n))\right)-t \\
     =&\frac{1}{2M+1}\left(\sum_{n=0}^{M}\bsone_{\left[0,t\right)}(\varphi(n))-(M+1)t+\sum_{n=0}^{M-1}\bsone_{\left[0,t\right)}(1-\varphi(n))-Mt\right) \\
     =&\frac{1}{2M+1}\left( (M+1)D_{M+1}^{\varphi}(t)+MD_{M}^{1-\varphi}(t) \right).
   \end{align*}
\end{proof}
\begin{corollary} \label{Cor1}
 We have   
 \[ |\mu_{j,m}^{N,\sym}|\leq \left\{ 
\begin{array}{ll}
\frac{1}{2}\left(|\mu_{j,m}^{M,\varphi}|+|\mu_{j,m}^{M,1-\varphi}|\right) & \mbox{ if } N=2M,\\
\frac{1}{2M+1}\left((M+1)|\mu_{j,m}^{M+1,\varphi}|+M|\mu_{j,m}^{M,1-\varphi}|\right) & \mbox{ if } N=2M+1.
\end{array}\right.
 \]
\end{corollary}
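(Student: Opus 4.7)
The plan is to derive the corollary as an immediate consequence of Lemma~\ref{Diskr} together with the linearity of the inner product $\langle \cdot, h_{j,m} \rangle$ and the triangle inequality. No new ideas are needed beyond what has already been established.

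First I would recall that, by definition, $\mu_{j,m}^{N,\sym} = \langle D_N^{\sym}, h_{j,m} \rangle$, and analogously for the two unsymmetrized sequences. Since the integral is linear, the decomposition in Lemma~\ref{Diskr} transfers verbatim to the Haar coefficients. Concretely, for $N = 2M$ one obtains
\[
\mu_{j,m}^{N,\sym} \;=\; \tfrac{1}{2}\bigl(\mu_{j,m}^{M,\varphi} + \mu_{j,m}^{M,1-\varphi}\bigr),
\]
and for $N = 2M+1$,
\[
\mu_{j,m}^{N,\sym} \;=\; \tfrac{1}{2M+1}\bigl((M+1)\,\mu_{j,m}^{M+1,\varphi} + M\,\mu_{j,m}^{M,1-\varphi}\bigr).
\]

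Applying the triangle inequality (noting that $M+1, M \ge 0$ and the prefactor $\frac{1}{2M+1}$ is positive) to each identity yields exactly the two bounds in the statement. There is no real obstacle here; the whole argument is a one-line manipulation after Lemma~\ref{Diskr} is in place. The only thing to keep in mind is simply that $\langle \cdot, h_{j,m}\rangle$ is a bounded linear functional on $L_1([0,1))$, so the decomposition of $D_N^{\sym}$ as a convex combination of $D_M^{\varphi}$ and $D_M^{1-\varphi}$ (or $D_{M+1}^{\varphi}$ and $D_M^{1-\varphi}$) passes to the Haar coefficients directly.
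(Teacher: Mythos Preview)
Your proposal is correct and follows exactly the paper's approach: the paper's proof simply states that linearity of integration and the triangle inequality applied to Lemma~\ref{Diskr} give the result. Your write-up just spells out these two steps explicitly.
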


\begin{proof} We consider linearity of integration and the triangle inequality to obtain the result from Lemma~\ref{Diskr}.
\end{proof}

We proceed with the calculation of $\mu_{j,m}$ in the case $j\in \NN_0$ and first prove the following general lemma.

\begin{lemma} \label{allgemein}
  Let $j \in \NN_0$ and $m \in \mathbb{D}_j$. Then for the volume part $f(t)=t$ of the discrepancy function we have
  \[ \mu_{j,m}(f)=-2^{-2j-2} \] and for the counting part $g(t)=\frac{1}{N}\sum_{n=0}^{N-1}\bsone_{\left[0,t\right)}(x_n)$
  we have
  \[ \mu_{j,m}(g)=\frac{2^{-j-1}}{N}\sum_{n=0 \atop x_n \in \stackrel{\circ}{I}_{j,m}}^{N-1}(|2m+1-2^{j+1}x_n|-1), \]
  where $\stackrel{\circ}{I}_{j,m}=\left(\frac{m}{2^j},\frac{m+1}{2^j}\right)$ denotes the interior of $I_{j,m}$.
\end{lemma}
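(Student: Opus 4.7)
The plan is to prove both formulas by direct computation of the defining integral $\mu_{j,m} = \int_0^1 \cdot\, h_{j,m}(t) \rd t$, exploiting the piecewise constant structure of $h_{j,m}$, namely $h_{j,m} = +1$ on $I_{j,m}^+ = [m/2^j,(2m+1)/2^{j+1})$, $h_{j,m} = -1$ on $I_{j,m}^- = [(2m+1)/2^{j+1},(m+1)/2^j)$, and $h_{j,m} = 0$ elsewhere. The two parts are essentially independent computations.

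For the volume part $f(t)=t$, I would just split the integral $\mu_{j,m}(f) = \int_{I_{j,m}^+} t \rd t - \int_{I_{j,m}^-} t \rd t$ and evaluate each antiderivative. Both halves have length $2^{-j-1}$ so the result depends only on the difference of the midpoints, which is again $2^{-j-1}$; multiplying gives exactly $-2^{-2j-2}$.

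For the counting part $g$, I would use linearity and Fubini to rewrite
\[ \mu_{j,m}(g) = \frac{1}{N}\sum_{n=0}^{N-1} \int_0^1 \bsone_{[0,t)}(x_n)\, h_{j,m}(t) \rd t = \frac{1}{N}\sum_{n=0}^{N-1} \int_{x_n}^{1} h_{j,m}(t) \rd t, \]
and then do a case analysis on the position of $x_n$. If $x_n \notin I_{j,m}$, the remaining integral is either $0$ (when $x_n \geq (m+1)/2^j$, since $h_{j,m}$ vanishes to the right of its support) or the full integral of $h_{j,m}$ over $I_{j,m}$ which also vanishes (when $x_n \leq m/2^j$). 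If $x_n \in I_{j,m}^+$, a direct computation gives $\int_{x_n}^1 h_{j,m}(t) \rd t = m/2^j - x_n$, and if $x_n \in I_{j,m}^-$ it gives $x_n - (m+1)/2^j$.

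The only slightly delicate point is recognising that both non-trivial cases can be combined into the single expression $2^{-j-1}(|2m+1 - 2^{j+1}x_n| - 1)$. This follows by noting that $2m+1 - 2^{j+1}x_n \geq 0$ on the left half and $\leq 0$ on the right half, so unfolding the absolute value reproduces the two formulas above. Finally, the endpoints $x_n = m/2^j$, $(m+1)/2^j$ yield zero in the formula as well (one checks $|2m+1-2m|-1 = 0$ and $|2m+1-(2m+2)|-1 = 0$), so restricting the sum to $x_n \in \stackrel{\circ}{I}_{j,m}$ changes nothing. I expect no real obstacle beyond keeping the case analysis and the sign bookkeeping straight.
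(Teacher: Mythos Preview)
Your proposal is correct and follows essentially the same route as the paper: direct evaluation of the integral of $t$ against $h_{j,m}$ for the volume part, and linearity followed by a case analysis on the position of $x_n$ relative to $I_{j,m}^\pm$ for the counting part, with the two nonzero cases then merged into the single absolute-value expression. Your explicit rewriting $\int_0^1 \bsone_{[0,t)}(x_n)\,h_{j,m}(t)\,\rd t = \int_{x_n}^1 h_{j,m}(t)\,\rd t$ and your endpoint check are minor clarifications, not a different argument.
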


\begin{proof}
   Of course, $$\mu_{j,m}(f)=\int_{0}^{1}t\, h_{j,m}(t)\rd t=\int_{m2^{-j}}^{m2^{-j}+2^{-j-1}}t\rd t-\int_{m2^{-j}+2^{-j-1}}^{(m+1)2^{-j}} t\rd t=-2^{-2j-2}.$$
   The Haar coefficients of $g$ are given by
   \begin{align*}\mu_{j,m}(g)=\int_{0}^{1}\left(\frac{1}{N}\sum_{n=0}^{N-1}\bsone_{\left[0,t\right)}(x_n)h_{j,m}(t)\right)\rd t = \frac{1}{N}\sum_{n=0}^{N-1}\underbrace{\int_{0}^{1}\bsone_{\left[0,t\right)}(x_n)h_{j,m}(t)\rd t}_{\mathcal{I}_n}.
   \end{align*}
   We analyse $\mathcal{I}_n$. If $x_n \notin I_{j,m}$ or $x_n=\frac{m}{2^j}$, it is evident that $\mathcal{I}_n=0$. One can check by simple integration,
   that in the case $x_n\in I_{j,m}^{+}$ we have $\mathcal{I}_n=m2^{-j}-x_n$, and if $x_n\in I_{j,m}^{-}$, then $\mathcal{I}_n=-2^{-j}-(m2^{-j}-x_n)$.
   These results can be combined to
   $$ \mathcal{I}_n=2^{-j-1}(|2m+1-2^{j+1}x_n|-1) \ \ \mbox{ if } \ x_n \in \stackrel{\circ}{I}_{j,m}. $$
   The claimed result follows.
\end{proof}

Now we are ready to show a central lemma.

\begin{lemma} \label{Theo1}
   We have
   $$|\mu_{j,m}^{N,\varphi}|\leq\frac{1}{N}\frac{1}{2^j}\ \ \ \mbox{ and } \ \ \ |\mu_{j,m}^{N,1-\varphi}|\leq\frac{1}{N}\frac{1}{2^j}$$
   for all $0\le j < \lceil\log_{2}{N}\rceil$ and
   $$ |\mu_{j,m}^{N,\varphi}|=|\mu_{j,m}^{N,1-\varphi}|=2^{-2j-2} $$
   for all $j \geq \lceil\log_{2}{N}\rceil$.
\end{lemma}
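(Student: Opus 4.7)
The starting point is Lemma~\ref{allgemein}, which gives
$$\mu_{j,m}^{N,\varphi}=2^{-2j-2}+\frac{1}{N\cdot 2^{j+1}}\sum_{\substack{0\le n<N\\ \varphi(n)\in\stackrel{\circ}{I}_{j,m}}}\!\bigl(|2m+1-2^{j+1}\varphi(n)|-1\bigr).$$
The plan is to describe the active index set explicitly. By Lemma~\ref{Phi}.3 it consists of the indices $n_s:=2^j\varphi(m)+2^js$, $s\in\NN_0$, that fall in $\{0,\ldots,N-1\}$. A brief binary-expansion check (or iterated application of parts 1 and 2 of Lemma~\ref{Phi}) yields $\varphi(n_s)=(m+\varphi(s))/2^j$. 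Since $\varphi(s)=0$ only for $s=0$, precisely the term $s=0$ lands on the boundary point $m/2^j\notin\stackrel{\circ}{I}_{j,m}$, while every $s\ge 1$ contributes. A direct substitution then produces the key simplification $|2m+1-2^{j+1}\varphi(n_s)|=|1-2\varphi(s)|$.

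For $j\ge\lceil\log_2 N\rceil$ one has $2^j\ge N$, so at most a single index of the arithmetic progression, namely $n_0$, lies in $\{0,\ldots,N-1\}$; and $\varphi(n_0)=m/2^j$ is excluded. The counting sum is therefore empty and $\mu_{j,m}^{N,\varphi}=2^{-2j-2}$. For $0\le j<\lceil\log_2 N\rceil$ we have $2^j<N$ and the admissible $s$ range over $\{1,\ldots,A\}$ with $A:=\lfloor(N-1-2^j\varphi(m))/2^j\rfloor$. The counting sum becomes $\sum_{s=1}^{A}|1-2\varphi(s)|-A$, and Lemma~\ref{Phi}.4 combined with $|1-2\varphi(0)|=1$ gives $\sum_{s=1}^{A}|1-2\varphi(s)|=A/2+r$ for some $r$ with $|r|\le 1$. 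Regrouping yields
$$\mu_{j,m}^{N,\varphi}=\frac{N-A\cdot 2^j}{N\cdot 2^{2j+2}}+\frac{r}{N\cdot 2^{j+1}}.$$
The definition of $A$ implies the two-sided bound $1\le N-A\cdot 2^j\le 2^{j+1}$, so each summand has absolute value at most $1/(N\cdot 2^{j+1})$, and the desired inequality $|\mu_{j,m}^{N,\varphi}|\le 1/(N\cdot 2^j)$ follows.

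The estimate for $1-\varphi$ is obtained by the reflection $m\mapsto m':=2^j-1-m$, since $1-\varphi(n)\in\stackrel{\circ}{I}_{j,m}$ is equivalent to $\varphi(n)\in\stackrel{\circ}{I}_{j,m'}$. Writing $n_s':=2^j\varphi(m')+2^js$, one computes $1-\varphi(n_s')=(m+1-\varphi(s))/2^j$ and hence $|2m+1-2^{j+1}(1-\varphi(n_s'))|=|1-2\varphi(s)|$ once more. The counting sum has exactly the same shape as before (with some $A'$ in place of $A$), and the identical bound applies. The main technical obstacle I anticipate lies in the second case: the crude estimate $|\sum_{s=1}^{A}(|1-2\varphi(s)|-1)|\le A/2+O(1)$ together with $A\le N/2^j$ would only yield $O(1/N)$, which is too weak; the gain to $1/(N\cdot 2^j)$ rests on the near-cancellation between the leading term $-A/(N\cdot 2^{j+2})$ and the volume contribution $2^{-2j-2}$, and requires the sharp bound $|N-A\cdot 2^j|\le 2^{j+1}$.
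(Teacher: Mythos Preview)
Your proposal is correct and follows essentially the same route as the paper: both start from Lemma~\ref{allgemein}, parametrize the active indices via Lemma~\ref{Phi}.3 to reduce the counting sum to $\sum_s(|1-2\varphi(s)|-1)$, invoke Lemma~\ref{Phi}.4, and handle $1-\varphi$ by the reflection $m\mapsto 2^j-1-m$. The only notable difference is cosmetic: where the paper bounds $\mu_{j,m}^{N,\varphi}$ from above and below separately, you regroup as $(N-A\cdot 2^j)/(N\cdot 2^{2j+2})+r/(N\cdot 2^{j+1})$ and use $1\le N-A\cdot 2^j\le 2^{j+1}$, which makes the cancellation between the volume term and the leading part of the counting sum more transparent but is the same computation underneath.
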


\begin{proof} 
  We start with $x_n=\varphi(n)$ and investigate the sum
  $$ \sum_{n=0 \atop \varphi(n) \in \stackrel{\circ}{I}_{j,m}}^{N-1}(|2m+1-2^{j+1}\varphi(n)|-1), $$
  which, according to Lemma~\ref{Phi}, we can transfer to 
  \begin{align*}\sum_{s=1}^{A}&\left(\left|2m+1-2^{j+1}\varphi\left(2^j\varphi(m)+2^js\right)\right|-1\right) \\ &=\sum_{s=1}^{A}\left(\left|2m+1-2^{j+1}\left(\frac{m}{2^j}+\varphi(2^js)\right)\right|-1\right) \\
  &=\sum_{s=1}^{A}\left(|1-2^{j+1}\varphi(2^js)|-1\right)=\sum_{s=0}^{A}\left(|1-2\varphi(s)|-1\right).
      \end{align*}
  We still have to specify the upper index. The conditions $0\leq n \le N-1$ and $n=2^j\varphi(m)+2^js$ lead
  to $s \leq \frac{N-1}{2^j}-\varphi(m)$, this is why we choose $A:=\lfloor\frac{N-1}{2^j}-\varphi(m) \rfloor$.
  We also see that there are no elements of $\{x_0,x_1,\dots,x_{N-1}\}$ contained in $\stackrel{\circ}{I}_{j,m}$, if 
  $2^j\varphi(m)+2^j\geq N$, which is fulfilled if $2^j\geq N$ or $j\geq \lceil\log_{2}{N}\rceil$.
  Regarding that fact, we immediately conclude from Lemma~\ref{allgemein} that $|\mu_{j,m}^{N,\varphi}|=2^{-2j-2}$ for $j\geq \lceil\log_{2}{N}\rceil$.
  We proceed with the case $j< \lceil\log_{2}{N}\rceil$ and have
  \begin{align*}
      \mu_{j,m}^{N,\varphi}&=\frac{2^{-j-1}}{N}\sum_{s=0}^{A}\left(|1-2\varphi(s)|-1\right)+2^{-2j-2} \\ &\leq \frac{2^{-j-1}}{N}\left(\frac{A}{2}+1-(A+1)\right)+ 2^{-2j-2}\\
         &=-\frac{2^{-j-2}}{N}A+2^{-2j-2}\leq -\frac{2^{-j-2}}{N}\left(\frac{N-1}{2^j}-\varphi(m)-1\right)+2^{-2j-2} \\
         &=\frac{1}{N}\left(2^{-2j-2}+(\varphi(m)+1)2^{-j-2}\right)\leq \frac{1}{N}\left(2^{-2j-2}+2^{-j-1}\right).
     \end{align*}
     We also find
     \begin{align*}
      \mu_{j,m}^{N,\varphi} &\geq \frac{2^{-j-1}}{N}\left(\frac{A}{2}-(A+1)\right)+ 2^{-2j-2}\\
         &=-\frac{2^{-j-1}}{N}\left(\frac{A}{2}+1\right)+2^{-2j-2}\geq -\frac{2^{-j-1}}{N}\left(\frac{N-1}{2^{j+1}}-\frac{\varphi(m)}{2}+1\right)+2^{-2j-2} \\
         &=\frac{1}{N}\left(2^{-2j-2}+\left(\frac{\varphi(m)}{2}-1\right)2^{-j-1}\right)\geq \frac{1}{N}\left(2^{-2j-2}-2^{-j-1}\right).
     \end{align*}
     By combining these results we finally obtain
     $$|\mu_{j,m}^{N,\varphi}|\leq \frac{1}{N}\left(2^{-2j-2}+2^{-j-1}\right)\leq \frac{1}{N}\frac{1}{2^j}$$
     as claimed.\\
     
     We turn to the estimation of $|\mu_{j,m}^{N,1-\varphi}|$, which can be treated similarly to $|\mu_{j,m}^{N,\varphi}|$.
     To begin with, we observe that
     \begin{align*}
        \sum_{n=0 \atop 1-\varphi(n) \in \stackrel{\circ}{I}_{j,m}}^{N-1}&\left(|2m+1-2^{j+1}(1-\varphi(n))|-1\right) \\
             &=\sum_{n=0 \atop \varphi(n) \in \stackrel{\circ}{I}_{j,2^j-m-1}}^{N-1}\left(|2m+1-2^{j+1}(1-\varphi(n))|-1\right) \\
             &=\sum_{s=1}^{B}\left(\left|2m+1-2^{j+1}\left(1-\varphi\left(2^j\varphi(2^j-m-1)+2^js\right)\right)\right|-1\right) \\
             &=\sum_{s=1}^{B}\left(\left|2m+1-2^{j+1}\left(1-\left(\frac{2^j-m-1}{2^j}+\varphi(2^js)\right)\right)\right|-1\right) \\
             &=\sum_{s=1}^{B}\left(\left|-1+2\varphi(s)\right|-1\right)=\sum_{s=0}^{B}\left(\left|1-2\varphi(s)\right|-1\right).
     \end{align*}
     In this expression, $B:=\lfloor \frac{N-1}{2^j}-\varphi(2^j-m-1) \rfloor$, which we deduce in the same way as the upper index $A$ above. Completely analogously
     as above, we obtain
     $$|\mu_{j,m}^{N,1-\varphi}|\leq \frac{1}{N}\left(2^{-2j-2}+2^{-j-1}\right)\leq \frac{1}{N}\frac{1}{2^j}$$ for $j< \lceil \log_{2}{N}\rceil$.
     The case $j\geq \lceil \log_{2}{N}\rceil$ also follows the same lines as above.
\end{proof}

\begin{corollary} The Haar coefficients of the symmetrized van der Corput sequence for $j \in \NN_0$ fulfil
$$|\mu_{j,m}^{N,\sym}|\left\{ 
\begin{array}{ll}
\leq \frac{1}{2^{j-1}}\frac{1}{N} & \mbox{ if } j < \lceil \log_{2}{N}\rceil,\\ \\
= 2^{-2j-2} & \mbox{ if } j\geq \lceil \log_{2}{N}\rceil.
\end{array}\right.$$
\end{corollary}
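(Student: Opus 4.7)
The plan is to combine Corollary~\ref{Cor1} with Lemma~\ref{Theo1}, splitting into the two regimes $j \geq \lceil\log_{2}{N}\rceil$ and $j < \lceil\log_{2}{N}\rceil$, and within each regime considering the two parities of $N$.

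\emph{High-$j$ regime.} When $j \geq \lceil\log_{2}{N}\rceil$, one checks that in both parity cases this strictly exceeds $\lceil\log_{2}{M}\rceil$ (and is at least $\lceil\log_{2}(M+1)\rceil$ in the odd case). Consequently every point $z_n$ of $\cH^{\sym}$ with $n < N$ is a dyadic rational on the grid $2^{-j}\integer$, so none lies in $\stackrel{\circ}{I}_{j,m}$. Lemma~\ref{allgemein} then forces the counting part to have zero Haar coefficient, and therefore $\mu_{j,m}^{N,\sym} = -\mu_{j,m}(t) = 2^{-2j-2}$, giving the claimed equality. (Equivalently, one could recover the equality from Lemma~\ref{Diskr} together with the observation that Lemma~\ref{Theo1} produces the \emph{positive} value $2^{-2j-2}$ for each ingredient in this regime, the convex weights in Corollary~\ref{Cor1} then summing to one.)

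\emph{Low-$j$ regime.} For $j < \lceil\log_{2}{N}\rceil$ I apply Corollary~\ref{Cor1} directly and substitute the bounds of Lemma~\ref{Theo1}. In the generic subcase $j < \lceil\log_{2}{M}\rceil$ (and, for odd $N$, $j < \lceil\log_{2}(M+1)\rceil$), each ingredient coefficient is bounded by $\frac{1}{M \cdot 2^j}$ (or $\frac{1}{(M+1)\cdot 2^j}$), and the weighted combination in Corollary~\ref{Cor1} simplifies neatly in both parities to $\frac{1}{2^{j-1} N}$. The boundary subcase is when $j$ reaches $\lceil\log_{2}{M}\rceil$ or $\lceil\log_{2}(M+1)\rceil$ but still $j < \lceil\log_{2}{N}\rceil$: here one or two of the ingredient coefficients equal $2^{-2j-2}$ instead. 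A short calculation shows $2^{-2j-2} \leq \frac{1}{2^{j-1} N}$, which is equivalent to $N \leq 2^{j+3}$ and follows from $2^j \geq M$, since then $N \leq 2M+1 \leq 2^{j+1}+1 \leq 2^{j+3}$.

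The main --- though mild --- obstacle is this boundary case-split: the threshold $\lceil\log_{2}{N}\rceil$ appearing in the conclusion can exceed $\lceil\log_{2}{M}\rceil$ by one (or by two, for odd $N$ when $M$ itself is a power of two), so Lemma~\ref{Theo1} cannot simply be quoted in its "small-$j$" branch throughout the low-$j$ regime. The overlap check above reconciles the two bounds and completes the proof.
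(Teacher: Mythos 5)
Your proof is correct and follows the same route as the paper, whose own proof of this corollary is the single sentence ``combine Corollary~\ref{Cor1} and Lemma~\ref{Theo1}''. You supply exactly the details that sentence suppresses --- the mismatch between the threshold $\lceil\log_2 N\rceil$ in the conclusion and the thresholds $\lceil\log_2 M\rceil$, $\lceil\log_2(M+1)\rceil$ governing the ingredients, and the sign observation needed to upgrade the triangle-inequality bound of Corollary~\ref{Cor1} to the claimed \emph{equality} in the regime $j\geq\lceil\log_2 N\rceil$ --- and both of these checks are carried out correctly.
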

\begin{proof} We combine Corollary~\ref{Cor1} and Lemma~\ref{Theo1} to obtain the result. 
\end{proof}

\section{The Proof of Theorem~\ref{thm1}}\label{secproofthm}

We are ready to show that the $L_p$-discrepancy of the symmetrized van der Corput sequence has optimal order in $N$ for any $p \in (1,\infty)$. We apply the {\it Littlewood-Paley inequality} which involves the {\em square function} 

$$ S(f) := \left( \sum_{j \in \NN_{-1}} \sum_{m \in \mathbb{D}_{j}} 2^{2\max\{0,j\}} \, \langle f , h_{j,m} \rangle^2 \, {\mathbf 1}_{I_{j,m}} \right)^{1/2}$$  
of a function $f\in L_p([0,1))$. 

\begin{lemma}[Littlewood-Paley inequality]\label{lpi}
Let $p \in (1,\infty)$. Then there exist $c_p,C_p>0$ such that for every $f\in L_p([0,1))$ we have 
 $$c_p \| f \|_{L_p} \le \| S(f) \|_{L_p} \le C_p \| f \|_{L_p}.$$
\end{lemma}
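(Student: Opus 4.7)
The Littlewood-Paley inequality for the Haar system is a classical result, and my plan is to reduce it to Burkholder's martingale-difference square function inequality. Let $(\mathcal{F}_j)_{j\ge 0}$ denote the dyadic filtration on $[0,1)$ generated by the partitions $\{I_{j,m}:m\in\mathbb{D}_j\}$, and for $f\in L_p([0,1))$ set $f_j:=\mathbb{E}[f\mid\mathcal{F}_j]$. An elementary computation on each interval $I_{j-1,m}$ shows that for $j\ge 1$ the martingale difference is $f_j-f_{j-1}=\sum_{m\in\mathbb{D}_{j-1}}2^{j-1}\langle f,h_{j-1,m}\rangle h_{j-1,m}$, so that $(f_j)_{j\ge 0}$ is a dyadic martingale converging to $f$ in $L_p$ whose differences are exactly the level-$(j-1)$ Haar partial sums of $f$.

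The next step is to rewrite $S(f)$ in martingale language. Since the intervals $I_{j-1,m}$ for different $m\in\mathbb{D}_{j-1}$ are pairwise disjoint and $h_{j-1,m}^2=\mathbf{1}_{I_{j-1,m}}$, the pointwise identity $|f_j-f_{j-1}|^2=\sum_{m\in\mathbb{D}_{j-1}}2^{2(j-1)}\langle f,h_{j-1,m}\rangle^2\mathbf{1}_{I_{j-1,m}}$ holds. Summing over $j\ge 1$ and adding the $j=-1$ term, which contributes $f_0^2=\langle f,h_{-1,0}\rangle^2$, yields
$$|S(f)|^2=f_0^2+\sum_{j=1}^{\infty}|f_j-f_{j-1}|^2,$$
the usual martingale square function of $(f_j)_{j\ge 0}$. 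Burkholder's inequality then furnishes, for every $p\in(1,\infty)$, constants $c_p,C_p>0$ such that $c_p\|f\|_{L_p}\le\|(f_0^2+\sum_j|f_j-f_{j-1}|^2)^{1/2}\|_{L_p}\le C_p\|f\|_{L_p}$, which is precisely the claimed equivalence.

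The main obstacle, and presumably the reason the authors cite Lemma~\ref{lpi} without proof, is Burkholder's inequality itself: its proof via the good-$\lambda$ method or by convex-function / Bellman-function arguments is substantial and would not fit the spirit of a short note on van der Corput sequences. An alternative I would keep in reserve combines Paley's theorem on the unconditionality of the Haar basis in $L_p([0,1))$ with Khintchine's inequality applied to a Rademacher-randomised Haar series; this is slightly shorter but rests on the same depth of harmonic analysis. In practice I would treat the lemma as a black box and refer to a standard source such as Wojtaszczyk's monograph on wavelets, which is what I expect the authors to do.
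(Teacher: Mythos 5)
Your proposal is correct and matches the paper's treatment: the paper gives no proof of Lemma~\ref{lpi} at all, simply citing Burkholder, Stein and Wang --- i.e.\ exactly the martingale square-function inequalities you reduce to. Your identification of $S(f)$ with the square function of the dyadic martingale $f_j=\mathbb{E}[f\mid\mathcal{F}_j]$ (the computation of the differences $f_j-f_{j-1}$ and the $j=-1$ term contributing $f_0^2$ are both correct) is the standard bridge, so treating Burkholder's inequality as a cited black box is precisely what the authors do.
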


Proofs of these inequalities and further details also yielding the right asymptotic behavior of the involved constants can be found in \cite{bur88,ste93,WG91}. Littlewood-Paley theory has already been used in the context of discrepancy before, see, e.g., \cite{DHP,HKP14,Skr,Skr12}.

Now we can give the proof of Theorem~\ref{thm1}.

 \begin{proof} Throughout this proof, we simply write $\mu_{j,m}$ instead of $\mu_{j,m}^{N,\sym}$. Using Lemma~\ref{lpi} with $f=D_{N}(\cH^{\sym},\cdot)$ we have
\begin{eqnarray*}
L_{p,N}(\cH^{\sym}) & = & \|D_{N}(\cH^{\sym},\cdot)\|_{L_p}\\
& \ll_p & \|S(D_{N}(\cH^{\sym},\cdot))\|_{L_p}
\\
& = & \left\| \left( \sum_{j \in \NN_{-1}} \sum_{m \in \mathbb{D}_{j}} 2^{2\max\{0,j\}} \, \mu_{j,m}^2 \, {\mathbf 1}_{I_{j,m}} \right)^{1/2} \right\|_{L_p}\\
& = & \left\|\sum_{j \in \NN_{-1}} 2^{2\max\{0,j\}} \sum_{m \in \mathbb{D}_{j}} \, \mu_{j,m}^2 \, {\mathbf 1}_{I_{j,m}} \right\|_{L_{p/2}}^{1/2}\\
& \le & \left( \sum_{j \in \NN_{-1}} 2^{2\max\{0,j\}} \left\| \sum_{m \in \mathbb{D}_{j}} \, \mu_{j,m}^2 \, {\mathbf 1}_{I_{j,m}} \right\|_{L_{p/2}} \right)^{1/2},
\end{eqnarray*}
where we used Minkowski's inequality for the $L_{p/2}$-norm. Hence, in order to prove the result it suffices to show that 
\begin{equation*}\label{eq2do}
\sum_{j \in \NN_{-1}} 2^{2\max\{0,j\}} \left\| \sum_{m \in \mathbb{D}_{j}}  \mu_{j,m}^2 \, {\mathbf 1}_{I_{j,m}} \right\|_{L_{p/2}} \ll \frac{\log{N}}{N^2}.
\end{equation*}
Now Lemma~\ref{erster} and Lemma~\ref{Theo1} give
\begin{align*}
\sum_{j \in \NN_{-1}} 2^{2\max\{0,j\}} \left\| \sum_{m \in \mathbb{D}_{j}}  \mu_{j,m}^2 \, {\mathbf 1}_{I_{j,m}} \right\|_{L_{p/2}} \le & \frac{1}{4N^2} \| \bsone_{\left[0,1\right)} \|_{L_{p/2}}  
        +\frac{4}{N^2}\sum_{j=0}^{\lceil \log_{2}{N} \rceil -1}\left\| \sum_{m \in \mathbb{D}_{j}}  \, {\mathbf 1}_{I_{j,m}} \right\|_{L_{p/2}} \\ 
       &+\frac{1}{16}\sum_{j=\lceil \log_{2}{N} \rceil}^{\infty}2^{-2j}\left\| \sum_{m \in \mathbb{D}_{j}}   \, {\mathbf 1}_{I_{j,m}} \right\|_{L_{p/2}} \\ 
       \le & \frac{1}{4N^2}+\frac{4}{N^2}(\log_{2}{N}+1)+\frac{1}{12}\frac{1}{4^{\lceil \log_{2}{N} \rceil}} \ll \frac{\log{N}}{N^2}
\end{align*}
where we regarded the fact that $\sum_{m \in \mathbb{D}_{j}}  \, {\mathbf 1}_{I_{j,m}}=1$ for a fixed $j \in \NN_0$. The proof is complete. 
 \end{proof}

\noindent{\bf Author's Addresses:}

\noindent Ralph Kritzinger and Friedrich Pillichshammer, Institut f\"{u}r Finanzmathematik, Johannes Kepler Universit\"{a}t Linz, Altenbergerstra{\ss}e 69, A-4040 Linz, Austria. Email: ralph.kritzinger(at)jku.at, friedrich.pillichshammer(at)jku.at

\end{document}